\DeclareMathOperator \re {Re}
\DeclareMathOperator \im {Im}
\numberwithin{equation}{section}
\newtheorem{prop}{Proposition}
\newtheorem{lem}[prop]{Lemma}
\newtheorem*{theorem}{Theorem}
\numberwithin{prop}{section}
\title[Nontrapping surfaces of revolution with long living resonances]{Nontrapping surfaces of revolution with long living resonances}
\author{Kiril R. Datchev}
\email{datchev@math.mit.edu}
\address{Department of Mathematics, MIT,
Cambridge, MA 02139, USA}
\email{kdatchev@purdue.edu}
\address{Department of Mathematics, Purdue University, West Lafayette, IN 47907, USA}
\author{Daniel D. Kang}
\email{ddkang@mit.edu}
\address{Department of Mathematics, MIT, Cambridge, MA 02139, USA}
\author{Andre P. Kessler}
\email{akessler@mit.edu}
\address{Department of Mathematics, MIT,
Cambridge, MA 02139, USA}
\begin{document}

\begin{abstract}
We study resonances of surfaces of revolution obtained by removing a disk from a cone and attaching a hyperbolic cusp in its place. These surfaces  include ones with nontrapping geodesic flow (every maximally extended non-reflected geodesic is unbounded) and yet infinitely many long living resonances (resonances with uniformly bounded imaginary part, i.e. decay rate). 
\end{abstract}

\maketitle

\addtocounter{section}{1}
\addcontentsline{toc}{section}{1. Introduction}
\thispagestyle{empty}

Let $a<0<b$, and let $(X,g)$ be the surface of revolution
\[
X = \mathbb{R} \times S^1, \qquad g= dr^2 + f(r)^2 d\theta^2,
\qquad
f(r) = 
\begin{cases}
	1 +ar &\mbox{if } r \leq 0, \\
	e^{-br} &\mbox{if } r > 0.
\end{cases}
\]

\begin{figure}[h]
\centering
	\begin{subfigure}{0.31\textwidth}
		\includegraphics[width=\textwidth]{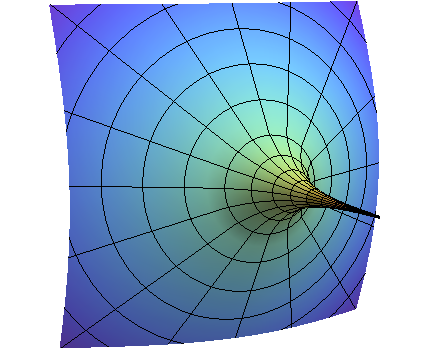}
		\caption{$a + b < 0$}
		\label{f:sumLess0}
	\end{subfigure}
	~
	\begin{subfigure}{0.31\textwidth}
		\includegraphics[width=\textwidth]{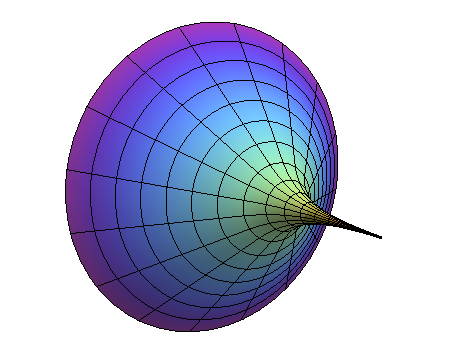}
		\caption{$a + b = 0$}
		\label{f:sumEq0}
	\end{subfigure}
	~
	\begin{subfigure}{0.31\textwidth}
		\includegraphics[width=\textwidth]{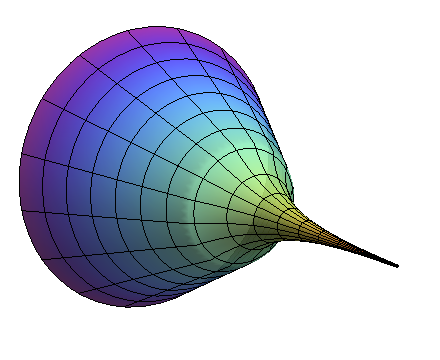}
		\caption{$a + b > 0$}
		\label{f:sumGt0}
	\end{subfigure}

\caption{The surface $(X,g)$ is obtained by attaching a conic end to a hyperbolic cusp. Three geometric pictures arise depending on the relationship between the widths of the cone and cusp. Case (b) is the most interesting, as it exhibits both nontrapping geodesic flow and long living resonances.}
\end{figure}

Let $\Delta_g$ be the nonnegative Laplacian on $(X,g)$. The resolvent $(\Delta_g - \lambda^2)^{-1}$ is holomorphic $L^2(X) \to L^2(X)$ for $\im \lambda > 0$. Poles of the continuation of its integral kernel from $\{\im \lambda > 0 \}$ to  $\{\im \lambda \le 0, \, \re \lambda >  b/2\}$ are called \textit{resonances} (see \cite{mel,zwres}).

\begin{theorem}
The surface of revolution $(X,g)$ has a sequence of resonances $(\lambda_k)_{k  \ge k_0}$ satisfying
\begin{equation}\label{e:propasy0}
\begin{split}
\re \lambda_k &= \pi b \frac{k}{\log k} \left( 1 + O\left( \frac{\log \log k}{\log k} \right) \right), \\
\im \lambda_k &= -\frac{bj}{2} + O\left( \frac{1}{\log k} \right),
\end{split}
\end{equation}

where  $j = 1$ if $a+b \neq 0$, and $j = 2$ if $a+b = 0$.
\end{theorem}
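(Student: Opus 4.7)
My plan is to exploit the rotational symmetry of $(X,g)$ by separation of variables. Decomposing $v(r,\theta) = \sum_n v_n(r) e^{in\theta}$ reduces the Laplacian in the $n$-th Fourier mode to the one-dimensional operator $-f^{-1}\partial_r(f\partial_r) + n^2/f^2$, and conjugation by $f^{1/2}$ yields the Schr\"odinger operator $P_n = -\partial_r^2 + V_n$ with $V_n = n^2/f^2 + f''/(2f) - (f')^2/(4f^2)$; since $f'$ jumps at $r=0$, the distributional $f''$ contributes a Dirac mass there of strength $-(a+b)$ which will drive the asymptotics below. Because $f$ is piecewise elementary, the eigenvalue equation $(P_n - \lambda^2)v = 0$ is solvable in closed form on each side. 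On the cone side ($r\le 0$), the substitution $s = -(1+ar)/a$ produces a Bessel equation of order $\mu = |n/a|$ whose outgoing solution is $v_- \propto \sqrt{s}\,H^{(1)}_\mu(\lambda s)$. On the cusp side ($r > 0$), the substitution $x = |n|e^{br}/b$ produces a modified Bessel equation of order $\nu = \sqrt{1/4 - \lambda^2/b^2}$ whose decaying solution is $v_+ \propto K_\nu(x)$. A resonance in mode $n$ is a $\lambda$ for which the unconjugated $u_\pm = v_\pm/\sqrt{f}$ join $C^1$ at $r = 0$, equivalent to the transcendental equation
\[
-\lambda\,\frac{(H^{(1)}_\mu)'(-\lambda/a)}{H^{(1)}_\mu(-\lambda/a)} = \frac{b}{2} + |n|\,\frac{K_\nu'(|n|/b)}{K_\nu(|n|/b)}.
\]

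I will fix $n = 1$ and extract the claimed sequence from this single mode. The large-argument expansion $(H^{(1)}_\mu)'(z)/H^{(1)}_\mu(z) = i - 1/(2z) + O(z^{-2})$ turns the left-hand side into $-i\lambda - a/2 + O(\lambda^{-1})$. For the right-hand side I apply Stirling to the connection formula $K_\nu = \pi(I_{-\nu} - I_\nu)/(2\sin\pi\nu)$ to obtain, for $\tau > 0$ large and $x$ fixed small,
\[
K_{i\tau}(x) \sim \sqrt{2\pi/\tau}\,e^{-\pi\tau/2}\sin\phi(\tau), \qquad \phi(\tau) = \tau\log(2\tau/x) - \tau + \tfrac{\pi}{4},
\]
so that $|n|K_\nu'(|n|/b)/K_\nu(|n|/b) \sim -b\tau\cot\phi(\tau)$ with $\tau = -i\nu \sim \lambda/b$. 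Combining the two sides, the resonance equation reduces asymptotically to
\[
\cot\phi(\tau) = i + \frac{a+b}{2\lambda} + O(\lambda^{-2}).
\]
The role of the parameter $a+b$ is now transparent: the $O(\lambda^{-1})$ correction is precisely the contribution of the $\delta$-mass in $V_n$ at $r = 0$, so the hypothesis $a+b = 0$ (a $C^1$ metric at the gluing point) kills this term and forces $\cot\phi - i = O(\lambda^{-2})$.

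Since for a resonance $\im\lambda<0$ and hence $\im\phi<0$, one has $|e^{-2i\phi}|\ll 1$, and the expansion $\cot\phi = i + 2ie^{-2i\phi}(1 + O(e^{-2i\phi}))$ can be inverted to give $e^{-2i\phi}$ of order $\lambda^{-1}$ generically and order $\lambda^{-2}$ when $a+b=0$. Taking logarithms yields $\re\phi = \pi k + O(1)$ and $\im\phi = -(j/2)\log\lambda + O(1)$ with $j$ as in the theorem. Using the approximations $\re\phi \approx \re\tau\,(\log(2\re\tau/x)-1)$ and $\im\phi \approx \im\tau\,\log(2\re\tau/x)$, one inverts to obtain $\re\tau = (\pi k/\log k)(1 + O(\log\log k/\log k))$ and $\im\tau = -j/2 + O(1/\log k)$; the relation $\lambda = b\tau(1 + O(\lambda^{-2}))$ then produces exactly \eqref{e:propasy0}. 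The main technical obstacle is making this formal inversion rigorous: I need uniform control of the $K_\nu$ asymptotic in a complex $\nu$-strip wide enough to contain $\im\nu \sim j/2$, and then a Rouch\'e argument in small boxes centered at each approximate location to conclude that the exact transcendental equation has exactly one solution in each box, with the subleading logarithmic corrections tracked throughout.
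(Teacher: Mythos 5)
Your proposal follows the same strategy as the paper: separation of variables, Bessel-function solutions on each side of $r=0$, matching to obtain a transcendental equation, Hankel asymptotics on the conic side and uniform-in-$\nu$ asymptotics of $K_\nu$ at fixed argument on the cusp side, and inversion of the resulting $\nu\log\nu$ relation. The differences are presentational rather than substantive — you conjugate to Schr\"odinger form and appeal to the sinusoidal large-order asymptotics of $K_{i\tau}$, whereas the paper keeps the quotient $\Gamma(\nu)/\Gamma(-\nu)$ exact (Lemma~\ref{l:main}) and only applies Stirling at the end, which is precisely what sidesteps the ``uniform control in a complex $\nu$-strip'' obstacle you flag; and the paper replaces your Rouch\'e step with an appeal to the Lambert $W$ function.
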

\begin{figure}[h]
\centering
\includegraphics[width=1.0\textwidth]{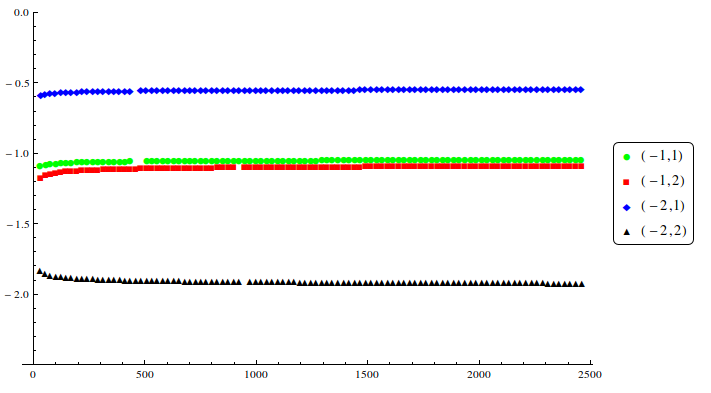}
\caption{Resonances $\lambda_k$ are plotted here in the complex plane, with $k$ ranging from $10$ to $1000$ in steps of $10$, for $(a,b)=(-1,1),(-2,1),(-1,2)$, and $(-2,2)$. They were computed by solving equation \eqref{e:derivquotient} numerically in Mathematica using FindRoot, initialized with the leading term of \eqref{e:propasy0}.}
\end{figure}	

The most interesting case is $a+b=0$, as illustrated in Figure \ref{f:sumEq0}. Then $f \in C^{1,1}$, so the geodesic flow on $(X,g)$ is well-defined and \textit{nontrapping} (see Proposition \ref{p:nontrap}) and yet there exists a sequence of \textit{long living} resonances, i.e a sequence $\lambda_k$ with $|\im \lambda_k|$ bounded. This seems to be a new phenomenon.

For many scattering problems it is known that sequences of long living resonances are impossible when the geodesic or bicharacteristic flow is nontrapping: such results go back to 
Lax and Phillips \cite{lp}, and Vainberg \cite{v} for asymptotically Euclidean scattering, and have been recently extended to asymptotically hyperbolic scattering by Vasy \cite{vasy}, Melrose, S\'a Barreto, and Vasy \cite{msv},  and Wang \cite{w}.
The result closest to the setting  of the Theorem is that of \cite{da}, where it is shown that \textit{smooth} nontrapping manifolds with cusp and funnel have no sequences of long living resonances (many more references can be found in that paper). 

Sequences of long living resonances  have been found for a variety of scattering problems, going back to work of Selberg \cite{sel} on finite volume hyperbolic quotients and Ikawa \cite{ik} on Euclidean obstacle scattering, but in each case the geodesic or bicharacteristic flow has been trapping; see e.g. Dyatlov \cite{dy} 
for some recent results and many references.
It is interesting to compare the asymptotic formula \eqref{e:propasy0} to analogous asymptotics for one dimensional potential scattering. By work of Regge \cite{regge}, Zworski \cite{z87} (see also Stepin and Tarasov \cite{stta}),  if $V\colon \mathbb R \to \mathbb R$ is supported in $[-L/2,L/2]$, is smooth away from $\pm L/2$ and vanishes to order $j \ge 0$ at $\pm L/2$, then the resonances  of $-\partial_r^2 + V(r)$  satisfy
\begin{equation}\label{e:logcurve}
\lambda_k = \frac \pi L k - i \frac{j+2}{L} \log k + O(1)
\end{equation}
in the right half plane.
Note that, as in our result, the decay rates (values of $|\im \lambda_k|$) are related to the regularity of the potential, with more regularity giving faster decay.

Sequences of resonances asymptotic to logarithmic curves (as in \eqref{e:logcurve}) have been found in other situations where the coefficients of the differential operator are not $C^\infty$: see work of Zworski \cite{z89} for scattering by radial potentials  and Burq  \cite{b} for scattering by two convex obstacles, one of which has a corner (and see also similar results by Galkowski on scattering by potentials supported on hypersurfaces in \cite{g} and in the references therein).  In \S\ref{s:3} we prove that if we  take $b<0<a$, and 
\[
X_1 = (-1/a,\infty) \times S^1, \qquad g_1= dr^2 + f_1(r)^2 d\theta^2,
\qquad
f_1(r) = 
\begin{cases}
	1 +ar &\mbox{if } r \leq 0, \\
	e^{-br} &\mbox{if } r > 0,
\end{cases}
\]
then for a fixed Fourier mode the resonances closest to the real axis obey an asymptotic more similar to \eqref{e:logcurve}, namely
\[
\lambda_k = \pi a k - \frac{i j a}{2} \log{k} + O(1).
\]
This suggests that the reflected geodesics (which, unlike the transmitted ones, may be trapped) are not by themselves enough to produce sequences of long living resonances.

In \cite{bw},  Baskin and Wunsch  study another scattering problem with nonsmooth coefficients (and in particular trapping of non-transmitted geodesics): they show that there are no long living resonances for nontrapping Euclidean scattering by a metric perturbation with cone points. For this they use the result of Melrose and Wunsch \cite{mw} that at cone points diffracted singularities for the wave equation are weaker than transmitted ones, as well as the method of Vainberg \cite{v} and Tang and Zworski \cite{tz}  which relates resolvent continuation and propagation of singularities. For a general metric $g \in C^{1, \alpha}$, $\alpha>0$, with only conormal singularities  (as in the case of a jump such as we have) de Hoop, Uhlmann, and Vasy \cite{duv} show that reflected singularities are weaker than transmitted ones: these results lead one to conjecture that there are no long living resonances for nontrapping Euclidean scattering by a metric perturbation with $C^{1,\alpha}$ jump singularities. When the discontinuities are more severe, as in the case of the transmission obstacle problem (where $g$ itself is discontinuous), long living resonances and even resonances with $|\im \lambda_k| \to 0$ have been observed (see Cardoso, Popov, and Vodev \cite{cpv} and references therein): note however that in that case, unlike in our $C^{1,1}$ case, the geodesic equation does not have unique solutions.

To prove the Theorem we use the fact that, on each Fourier mode in the angular variable, $\Delta_g$ is an ordinary differential operator and the integral kernel of $(\Delta_g- \lambda^2)^{-1}$ can be written in terms of Bessel functions. Resonances occur at values of $\lambda$ satisfying a transcendental equation (see \eqref{e:derivquotient} below), and we use Bessel function asymptotics to analyze these solutions for a fixed Fourier mode $m$. 

It would be interesting to find out if increasing the regularity of $(X,g)$ further leads to further increases in $|\im \lambda_k|$. The results of \cite{da}, that smooth nontrapping manifolds with cusp and funnel have no sequences of long living resonances, suggests that the answer is yes. To study this, one could modify $f$ on $(-R,0)$ for some $R>0$ to make  more derivatives of $f$ continuous at $0$. Then one could replace the Hankel function asymptotics used in \eqref{e:h'h} by WKB asymptotics (as in e.g. \cite[Chapter 10, \S3.1]{Olver:Asymptotics}), and compute further terms of the asymptotic expansion in Lemma \ref{l:main}.

Another interesting problem is to find \textit{all} the long living resonances of $(X,g)$. This would require asymptotics uniform in the Fourier mode $m$.  Such asymptotics for Bessel functions in terms of Airy functions are known (see e.g. \cite[\S 9.3.37]{abst}) but the resulting transcendental equation seems difficult to solve. 

\textbf{Acknowledgements.} We are very grateful to David Borthwick, David Jerison, Richard Melrose,  Andr\'as Vasy, and Maciej Zworski for many helpful discussions and suggestions. Thanks also to the two anonymous referees for their comments and corrections. KD was partially supported by the National Science Foundation under a postdoctoral fellowship.  DK and AK were partially supported by MIT under the Undergraduate Research Opportunities Program and by the National Science Foundation under grant DMS-1005944.

\section{Proof of Theorem}

\begin{lem}\label{p:nontrap}
When $a+b=0$ the manifold $(X,g)$ has a nontrapping geodesic flow.
\end{lem}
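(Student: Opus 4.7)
The plan is to combine Clairaut's relation for surfaces of revolution with the fact that, when $a+b=0$, the profile function $f$ is strictly monotone decreasing on all of $\mathbb{R}$.

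First I would check that $f \in C^{1,1}$ when $a+b=0$: the left derivative at $0$ is $f'(0^-) = a = -b = f'(0^+)$, and $f''$ is bounded piecewise. Hence the Christoffel symbols are Lipschitz, so the geodesic equation has unique solutions by Picard--Lindel\"of, and since $(X,g)$ is complete (it is $\mathbb{R} \times S^1$ with a metric bounded below on compacta) the geodesic flow is defined for all time.

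Next I would set up the conserved quantities. Parametrize a geodesic by arc length and write it as $(r(t), \theta(t))$. Clairaut gives $L := f(r)^2 \dot\theta$ constant, and unit-speed normalization gives
\[
\dot r(t)^2 = 1 - \frac{L^2}{f(r(t))^2}.
\]
The crucial geometric input is that when $a+b=0$, the function $f$ is strictly decreasing: on $r\le 0$ one has $f'(r)=a=-b<0$, and on $r>0$ one has $f'(r)=-b e^{-br}<0$; moreover $f(r) \to +\infty$ as $r \to -\infty$ and $f(r) \to 0$ as $r \to +\infty$. Hence for any $L$ the classically allowed region $\{f(r) \ge |L|\}$ is a half-line $(-\infty, r_*]$ with $r_* = f^{-1}(|L|)$, which already shows the orbit is unbounded below in $r$ (provided the geodesic actually visits arbitrarily negative $r$).

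It remains to verify that $r(t)$ really escapes to $-\infty$. If $L=0$ then $\dot\theta \equiv 0$ and $\dot r = \pm 1$, so $r(t) = r_0 \pm t$, trivially unbounded. If $L \ne 0$, then $\dot r$ can vanish only at $r_*$, where the radial geodesic equation $\ddot r = f(r) f'(r) \dot\theta^2$ is strictly negative (since $f'(r_*)<0$ and $\dot\theta \ne 0$ there). So $r_*$ is a nondegenerate turning point: the geodesic reflects once and then moves monotonically toward $r=-\infty$ for all future time. In particular it leaves every compact set, so it is unbounded.

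The only slightly delicate case is $|L|=1$, where $r_* = 0$ coincides with the junction between the cone and the cusp. Here one must be sure that the geodesic equation is well-posed at a point where $f''$ has a jump; this is exactly what the $C^{1,1}$ regularity (consequence of $a+b=0$) buys us, via Lipschitz Christoffel symbols and standard ODE uniqueness. Once this point is handled, the main obstacle evaporates and nontrapping follows.
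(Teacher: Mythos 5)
Your argument is correct, and it takes a somewhat different route from the paper's. The paper works directly with the radial equation $\ddot r = f f' \dot\theta^2$: since $f'<0$ everywhere (for any $a<0<b$), $r(t)$ is concave, so $\dot r$ is monotone; the only way the geodesic could stay bounded is $r\to\mathrm{const}$, which is ruled out because then $\ddot r \to 0$ while $f f'\dot\theta^2$ stays away from $0$. The role of $a+b=0$ is exactly the one you identify: $f\in C^{1,1}$, so the flow is well defined. You instead invoke the first integrals (Clairaut's $L=f^2\dot\theta$ and unit speed, giving $\dot r^2 = 1 - L^2/f(r)^2$) together with the strict monotonicity of $f$ from $+\infty$ to $0$, and run a turning-point analysis; this buys more structure (at most one turning point $r_*=f^{-1}(|L|)$, and in fact linear escape to $r=-\infty$), at the cost of a slightly longer setup, while the paper's concavity argument is shorter and never needs the conserved quantities. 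One step you should make explicit: after the turning point (or if $\dot r<0$ throughout), monotone decrease of $r$ alone does not give $r\to-\infty$; you need your energy identity again, namely that once $r \le r_*-\epsilon$ one has $\dot r^2 \ge 1 - L^2/f(r_*-\epsilon)^2 > 0$, so $r$ cannot converge to a finite limit (and similarly, a trajectory with $\dot r>0$ forever would approach $r_*$ asymptotically, which is excluded because $\ddot r \le -c<0$ near $r_*$ forces $\dot r$ to vanish in finite time). These are one-line additions using exactly the identities you already wrote down, so the proof is sound as structured.
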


\begin{proof}
The geodesic equations of motion for this flow are
\begin{align}
\ddot{r} - f' f(\dot{\theta})^2 &= 0 \label{e:geosoln},\\
\ddot{\theta} &= 0.
\end{align}
If $f\in C^{1,1}$, there is a unique solution. Additionally, if $\dot{\theta}=0$, then $\dot{r}=0$ is disallowed because that would describe a stationary solution. In this case \eqref{e:geosoln} reduces to $\ddot{r}=0$ supplemented with the condition $\dot{r}\neq 0$, and this forces $r\rightarrow \pm\infty$ linearly.

We take $a$ negative and $b$ positive, so $f'$ is always negative and thereby from the first equation of motion \eqref{e:geosoln} we find $\ddot{r} \leq 0$. Thus as $t$ increases, $r$ must tend toward either a constant or negative infinity. In the former case, we must also have that $\ddot{r} \rightarrow 0$. However, \eqref{e:geosoln} would require that $f'f(\dot{\theta})^2 \rightarrow 0$ which is impossible if $\dot{\theta}$ does not vanish, so $r$ cannot tend to a constant. Hence $r$ tends to negative infinity and describes a nontrapping geodesic.
\end{proof}

In $(r,\theta)$ coordinates the Laplacian on $(X,g)$ is given by
\[
\Delta_g = -f^{-1} \partial_r f \partial_r - f^{-1} \partial_{\theta} f^{-1} \partial_{\theta} = - \partial_r^2 - f'f^{-1} \partial_r  - f^{-2} \partial_{\theta}^2.
\]
Let
\[
P(m) =- \partial_r^2 - f^{-1}f' \partial_r + f^{-2} m^2.
\]
On functions of the form $u(r) e^{i m \theta}$ the Laplacian acts as follows:
\[
\Delta_gu(r) e^{i m \theta} = P(m) u(r) e^{i m \theta}.
\]

\begin{lem}\label{p:mercont}
The outgoing resolvent, defined by 
\[(P(m) - \lambda^2)^{-1} \colon L^2(\mathbb R, f(r) dr) \to  L^2(\mathbb R, f(r) dr), \quad \im \lambda > 0,
\]
is holomorphic on the half plane $\im \lambda > 0$.
Its integral kernel continues meromorphically to a covering space of $\{\lambda \in \mathbb C \colon \lambda \ne 0 , \ \lambda \ne \pm b/2\}$.
\end{lem}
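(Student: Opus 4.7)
The plan is to construct the resolvent kernel explicitly via Sturm--Liouville theory, using the fact that $(P(m) - \lambda^2) u = 0$ reduces to a Bessel-type equation on each of the regions $r \le 0$ and $r \ge 0$. On the conic side, the substitution $s = (1 + ar)/|a|$ turns the equation into Bessel's equation of order $\nu = m/|a|$ with argument $\lambda s$, so the solution that is $L^2$ near $r = -\infty$ for $\im \lambda > 0$ can be taken to be
\[
u_-(r;\lambda) = H^{(1)}_{m/|a|}\bigl( \lambda (1+ar)/|a| \bigr).
\]
On the cusp side, the substitutions $u = e^{br/2} w$ and $s = (m/b) e^{br}$ transform the equation into the modified Bessel equation of order $\mu(\lambda) := \sqrt{1/4 - (\lambda/b)^2}$, so the solution that is $L^2$ near $r = +\infty$ can be taken to be
\[
u_+(r;\lambda) = e^{br/2} K_{\mu(\lambda)}\bigl((m/b) e^{br}\bigr).
\]
Since $f \in C^{1,1}$, each of $u_\pm$ extends uniquely to all of $\mathbb{R}$ as a solution of $(P(m) - \lambda^2)u = 0$ by matching $u$ and $u'$ at $r = 0$.

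Next I form the Sturm--Liouville Wronskian $W(\lambda) = f(r)\bigl(u_-(r;\lambda) u_+'(r;\lambda) - u_-'(r;\lambda) u_+(r;\lambda)\bigr)$, which is independent of $r$, and define
\[
R(r, r'; \lambda) = \frac{1}{W(\lambda)}\,u_-(r_<;\lambda)\, u_+(r_>;\lambda).
\]
For $\im \lambda > 0$, the exponential decay of $u_\pm$ at their respective infinities shows that $R$ is the integral kernel of a bounded operator on $L^2(\mathbb R, f(r) dr)$; a standard Sturm--Liouville verification confirms that it inverts $P(m) - \lambda^2$. In this half plane $W(\lambda) \ne 0$, because if $W(\lambda) = 0$ then $u_-$ and $u_+$ would be proportional, producing an $L^2$ eigenfunction of the non-negative self-adjoint operator $P(m)$ with eigenvalue $\lambda^2 \notin [0, \infty)$.

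For meromorphic continuation I would track the $\lambda$-dependence of each ingredient. The Hankel function $H^{(1)}_\nu(z)$ is multi-valued around $z=0$ only, so $u_-(r;\lambda)$ continues as a meromorphic function to the universal cover of $\mathbb{C}\setminus\{0\}$. The modified Bessel function $K_\mu(z)$ is entire in the order $\mu$ for fixed $z>0$ (apparent poles at integer $\mu$ are removable), while the order $\mu(\lambda)$ has branch points precisely at $\lambda = \pm b/2$; hence $u_+(r;\lambda)$ continues to the universal cover of $\mathbb{C}\setminus\{\pm b/2\}$. The matching at $r = 0$ that globalizes each $u_\pm$ is a linear system with meromorphic coefficients in $\lambda$ on the relevant covering space, so the globally defined $u_\pm$, the Wronskian $W(\lambda)$, and finally $R(r,r';\lambda)$ all continue meromorphically to the appropriate covering space of $\mathbb{C}\setminus\{0,\pm b/2\}$.

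The main technical point is the identification of the branch locus: one must check that no additional singularities arise from the matching at $r = 0$ and that the correct branches of the logarithmic and square-root singularities are threaded consistently. This is essentially bookkeeping once $u_\pm$ are written down explicitly, but it is what justifies the precise statement that the continuation lives on a covering space of $\{\lambda \ne 0,\, \lambda \ne \pm b/2\}$ rather than of a larger or smaller punctured domain.
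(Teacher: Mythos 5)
Your proposal follows essentially the same route as the paper: explicit Hankel/modified-Bessel solutions on the two pieces matched at $r=0$, the resolvent kernel built from the two decaying solutions and their Wronskian, nonvanishing of the Wronskian for $\im \lambda > 0$ via nonnegativity/self-adjointness, and meromorphic continuation from the analyticity of the Hankel terms in $\log\lambda$ and of the modified Bessel terms in the order $\nu$ (your $H^{(1)}_{m/|a|}(\lambda(1+ar)/|a|)$ is the paper's $H^{(2)}_{m/a}(\lambda(r+1/a))$ up to a constant). Two small caveats: $f$ is $C^{1,1}$ only when $a+b=0$ (in general $f'$ jumps at $0$, but matching $u$ and $u'$ there is still the right condition because the coefficient $f'/f$ is merely bounded), and the mode $m=0$ needs the separate, simpler treatment with $e^{\pm b\nu r}$ in place of $I_\nu, K_\nu$, as the paper notes.
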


\begin{proof}

For $\lambda \in \mathbb C \setminus \{0\}$ and $m > 0$, the general solution to the Helmholtz equation 
 \begin{equation}\label{e:homoghelm}
(P(m) - \lambda^2)u = 0,
\end{equation}
 is given  by 
\[
u(r) =
\begin{cases}
	c_1 H^{(1)}_{m/a}(\lambda(r+1/a)) + c_2 H^{(2)}_{m/a}(\lambda(r+1/a))&\mbox{if } r \leq 0, \\ 
	c_3 e^{br/2} I_{\nu}(\frac{m}{b} e^{br}) + c_4e^{br/2} K_{\nu}(\frac{m}{b} e^{br})&\mbox{if } r > 0,
\end{cases}
\]
where $H^{(1)}$ and $H^{(2)}$ are the Hankel functions (as in \cite[\S9.1]{abst}), $I$ and $K$ are the modified Bessel functions (as in \cite[\S9.6]{abst}), $\nu$ is given by
\begin{equation}\label{e:nudef}
\nu := \sqrt{\frac 14 - \frac {\lambda^2}{b^2}} = -i \frac \lambda b\left(1 + O(\lambda^{-2})\right),
\end{equation}
  and $c_1$, $c_2$, $c_3$, $c_4 \in \mathbb C$ are  taken such that $u$ and $u'$ are continuous at $0$. Indeed, for $r \ne 0$ this follows from the differential equations solved by $H^{(1)}$, $H^{(2)}$, $I$, and $K$ (\cite[\S 9.1.1, \S 9.6.1]{abst}), and the condition at $0$ follows from the  fact that $f$ has, at worst, a jump singularity there.

By the method of variation of parameters, the inhomogeneous Helmholtz equation 
\begin{equation}\label{e:inhomoghelm}
(P(m) - \lambda^2)u = v
\end{equation}
is solved by
\begin{equation}\label{e:udefr}
u(r) = \int_{-\infty}^\infty R(r,r') v(r')dr', \quad 
R(r,r') := -\psi_1(\max\{r,r'\})\psi_2(\min\{r,r'\})/W(r'),
\end{equation}
for $\psi_1$ and $\psi_2$ linearly independent solutions of \eqref{e:homoghelm}, and $W = \psi_1\psi_2' - \psi_2\psi_1'$ their Wronskian.

The resolvent operator $v \mapsto u$ is bounded on $L^2(\mathbb R, f(r)dr)$ for $\im \lambda > 0$ if and only if 
\begin{align*}
\psi_1(r) &= e^{br/2} K_{\nu}\left(\frac{m}{b} e^{br}\right), \mbox{ } r > 0, \\
\psi_2(r) &= H^{(2)}_{m/a}(\lambda(r+1/a)), \mbox{ } r < 0,
\end{align*}
up to an overall constant factor. The condition on $\psi_1$ is justified by the asymptotics \cite[\S9.2.3, \S9.2.4]{abst} ($H^{(1)}$ grows exponentially and $H^{(2)}$ decays exponentially as $r \to - \infty$). The condition on $\psi_2$ is justified by \cite[\S9.7.1, \S9.7.2]{abst} ($I$ grows double-exponentially and $K$ decays double-exponentially as $r \to \infty$).

Meromorphic continuation now follows from the fact that the Hankel function terms are entire in $\log \lambda$ and the modified Bessel function terms are  entire in $\nu$. 
Poles of $R(r,r')$ occur at the values of $\lambda$ for which the Wronskian $W$ vanishes, or equivalently for which $\psi_1$ is a multiple of $\psi_2$, i.e. for which there is $c \in \mathbb C$ such that
\[
u(r) = \begin{cases}
	H_{m/a}^{(2)}\left(\lambda\left(r+\frac{1}{a}\right)\right)           &\mbox{if } r \leq 0 \\
	c e^{br/2} K_{\nu}\left(\frac{m}{b} e^{br}\right)  &\mbox{if } r > 0,
\end{cases}
\]
is continuous along with its first derivative at $0$. That is,
\begin{align*}
H_{m/a}^{(2)}\left(\frac{\lambda}{a}\right)           &= c K_{\nu}\left(\frac{m}{b}\right) \\
\lambda H_{m/a}^{(2)'}\left(\frac{\lambda}{a}\right)  &= c m K_{\nu}^{'}\left(\frac{m}{b}\right) + \frac{cb}{2} K_{\nu}\left(\frac{m}{b}\right).
\end{align*}
Dividing the equations to eliminate $c$ gives 
\begin{equation}\label{e:derivquotient}
\frac{ K_{\nu}'(\frac{m}{b}) }{ K_{\nu}(\frac{m}{b}) } = 
\frac{ \lambda H_{m/a}^{(2)'}(\frac{\lambda}{a}) }{ m H_{m/a}^{(2)}(\frac{\lambda}{a}) } - \frac{b}{2m}.
\end{equation}
It is important to keep track of the branch of $H^{(2)}$: as the range of $\lambda$ extends from $\{\im \lambda>0\}$ to $\{\im \lambda>0\} \cup \{|\re \lambda| > b/2\}$, 
the range of $\arg (\lambda/a)$ extends from $(-\pi,0)$ to $(-3\pi/2, \pi/2)$.

Note that no  poles (i.e. solutions to \eqref{e:derivquotient}) can occur when $\im \lambda > 0$. This follows from the self-adjointness of $\Delta_g$ on a suitable domain, but can also be checked directly from the fact that at such a pole we would have $\langle P(m)u,u\rangle_{L^2} = \lambda^2 \|u\|_{L^2}^2$
for the corresponding $u$, and the left hand side of this equation is nonnegative $\langle P(m) u, u \rangle_{L^2} \geq 0$ via integration by parts, but this is a contradiction since $\lambda^2 \not\in [0,\infty)$. This also shows that \eqref{e:udefr} is the unique $L^2$ solution to \eqref{e:inhomoghelm}  when $v \in L^2$, $\im \lambda>0$.

For $m = 0$, the analysis is similar and slightly simpler: the only change is that $I_\nu(\frac m b e^{br})$ is replaced by $e^{b\nu r}$ and $K_\nu(\frac m b e^{br})$ is replaced by $e^{-b \nu r}$.
\end{proof}

Poles of the meromorphic continuation of the integral kernel $R(r,r')$ (i.e. solutions to \eqref{e:derivquotient}) are called \textit{resonances} of $P(m)$.

\begin{prop}\label{p:main}
For any fixed  $a<0<b$, $m>0,\ \varepsilon>0,$ there are $\lambda_0>0, k_0 \in \mathbb N$ such that the resonances of $P(m)$ in the region $\{ |\lambda| > \lambda_0, |\arg \lambda| < \pi/2 - \varepsilon \}$ form a sequence $(\lambda_k)_{k \ge k_0}$ satisfying
\begin{equation}\label{e:propasy}
\begin{split}
\re \lambda_k &= \pi b \frac{k}{\log k} \left( 1 + O\left( \frac{\log \log k}{\log k} \right) \right), \\
\im \lambda_k &= -\frac{bj}{2} + O\left( \frac{1}{\log k} \right),
\end{split}
\end{equation}

where  $j = 1$ if $a+b \neq 0$, and $j = 2$ if $a+b = 0$.
\end{prop}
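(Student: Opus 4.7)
The plan is to reduce the transcendental equation \eqref{e:derivquotient} to one of the form $\rho(\lambda) = h(\lambda)$, where $\rho$ is explicit, polynomially small, and rapidly oscillating in $\lambda$, while $h$ is explicit and slowly varying. The asymptotic locations of resonances can then be read off by matching modulus and argument separately, and finally promoted to the exact claim via a Rouch\'e-type winding argument.

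I would first expand both sides of \eqref{e:derivquotient} asymptotically for large $|\lambda|$. On the right, since $a<0$ and $\arg\lambda$ is near zero, $\arg(\lambda/a)$ is near $-\pi$ on a non-principal sheet of $H^{(2)}$. Applying the connection formula $H^{(2)}_\nu(ze^{-i\pi}) = -e^{i\nu\pi}H^{(1)}_\nu(z)$ rewrites the Hankel quotient in terms of $H^{(1)}_{m/a}(\lambda/|a|)$, to which the standard large-argument expansion applies; keeping two terms gives
\[
\frac{\lambda H^{(2)'}_{m/a}(\lambda/a)}{m H^{(2)}_{m/a}(\lambda/a)} - \frac{b}{2m} = -\frac{i\lambda}{m} - \frac{a+b}{2m} + O(\lambda^{-1}).
\]
On the left, $\re\nu<0$ on the resonance sheet, so the symmetry $K_\nu = K_{-\nu}$ lets me work with $\mu := -\nu \sim i\lambda/b$, $\re\mu>0$ large. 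Writing $K_\mu = \frac{\pi}{2\sin\pi\mu}(I_{-\mu} - I_\mu)$ and computing its logarithmic derivative yields
\[
\frac{K_\mu'(z)}{K_\mu(z)} = -\frac{\mu}{z}\cdot\frac{1+\rho}{1-\rho} + O(\mu^{-1}), \qquad \rho := \frac{I_\mu(z)}{I_{-\mu}(z)}.
\]
Substituting the leading Frobenius terms $I_{\pm\mu}\sim(z/2)^{\pm\mu}/\Gamma(1\pm\mu)$ and applying Stirling to $\Gamma(1\pm\mu)$, a short calculation gives
\[
\rho = -i\bigl(iez/(2\mu)\bigr)^{2\mu}(1 + O(\mu^{-1})),
\]
whose modulus behaves like $(em/(2b\tau))^{2\alpha}$ (polynomially decaying) and whose phase behaves like $2\tau\log(em/(2b\tau))-\pi/2$ (rapidly varying), where I write $\mu = \alpha + i\tau$.

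Substituting these expansions into \eqref{e:derivquotient} and cancelling the leading $-i\lambda/m$ on each side, the equation reduces to
\[
\rho(\lambda) = -\frac{i(a+b)}{4\lambda}(1 + O(\lambda^{-1})), \qquad (a+b\ne 0).
\]
Matching moduli and comparing powers of $\log\tau$ forces $\alpha = \tfrac{1}{2} + O(1/\log\tau)$, hence $\im\lambda = -b/2 + O(1/\log k)$; matching arguments gives the quantization $2\tau\log(2b\tau/(em)) \equiv 2\pi k \pmod{2\pi}$, and iterative inversion of $\tau\log\tau \sim \pi k$ yields $\re\lambda_k = b\tau$ with the stated asymptotic. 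When $a+b=0$, the constant term on the reduced right-hand side vanishes and I would carry one more order in both the Hankel and Bessel expansions; the reduced equation then takes the form $\rho \sim C/\lambda^2$, shifting the modulus balance to $\alpha = 1 + O(1/\log\tau)$ and so $\im\lambda = -b + O(1/\log k)$, matching $j=2$.

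Finally, to upgrade these approximate solutions to a genuine sequence of resonances and to rule out others in the region, I would apply Rouch\'e's theorem on small disks around each predicted $\lambda_k$: the oscillatory factor in $\rho$ contributes winding number one per disk, while the right-hand side is slowly varying. The main obstacle is uniformity of the expansions with $\re\mu$ in a bounded strip, since $\rho$ is only polynomially small there and $I_\mu$ cannot be discarded relative to $I_{-\mu}$. The $a+b=0$ case is especially delicate because second-order terms from both asymptotic expansions must be retained exactly, so that the genuine $1/\lambda^2$ on the right-hand side is not swamped by the error.
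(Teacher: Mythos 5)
Your reduction is essentially the paper's. Expanding the Hankel quotient to two terms (as in \eqref{e:h'h}) and writing $K$ through $I_{\pm\nu}$ at fixed argument $z=m/b$ and large order is exactly how Lemma \ref{l:main} converts \eqref{e:derivquotient} into \eqref{e:cj}; your $\rho=I_\mu(z)/I_{-\mu}(z)$ with $\mu=-\nu$ is, up to sign and a $1+O(1/\nu)$ factor, the paper's $g(\nu)=(z/2)^{-2\nu}\Gamma(\nu)/\Gamma(-\nu)$, and your Stirling evaluation of $\rho$ is the paper's Stirling step. Where you genuinely differ is the endgame: the paper substitutes $\tilde\nu=i(2\nu+j)/(ez)$ to arrive at the single exact relation \eqref{e:nutilde2}, $\tilde\nu\log\tilde\nu=2\pi k/(ez)+O(1)$, and solves it with the principal branch of the Lambert W function plus a quantitative inversion theorem, which yields existence, uniqueness, and both asymptotics in \eqref{e:propasy} in one stroke; you split modulus and argument by hand and then propose Rouch\'e near each predicted point. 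Your route can be made to work, and the modulus/phase splitting is a transparent way to see where $j/2$ enters, but it costs you a separate localization argument that the Lambert W formulation gives for free.

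Two points need to be firmed up. First, completeness: Proposition \ref{p:main} says the resonances in the sector \emph{are} the sequence $(\lambda_k)$, and Rouch\'e on small disks around predicted points does not exclude resonances elsewhere in the region. The mechanism is available to you --- the modulus constraint $|\rho|\asymp\tau^{-j}$ forces $\re\mu=j/2+O(1/\log\tau)$ for \emph{every} solution in the sector, and the phase condition then pins $\tau$ near the quantized values --- but it must be run as a rigorous ``only if'' localization of all solutions (this is the role of the equivalence in Lemma \ref{l:main} together with the uniqueness of the Lambert W solution in the paper), not merely as a device for generating candidates. Second, the case $a+b=0$ hinges on a nonvanishing constant you have not computed: your expansion of $K_\mu'(z)/K_\mu(z)$ carries an unspecified additive $O(\mu^{-1})$ error, i.e.\ a relative error $O(\mu^{-2})$, which is precisely the order at which that case lives, so asserting $\rho\sim C/\lambda^2$ is empty until you verify $C\neq0$. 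The paper does this via the recurrence identity \eqref{e:inur}, $R_1-R_3=-z^2/(2\nu^2)+O(\nu^{-3})$, combined with the $\nu^{-2}$ term of \eqref{e:h'h}; the $m$-dependent contributions cancel and leave $g(\nu)=-(1+O(1/\nu))/(8\nu^2)$. At the precision of \eqref{e:propasy} the value and sign of the constant are immaterial (they shift the phase quantization by $O(1)$, and indeed your stated constant in the $a+b\neq0$ case appears to differ from the paper's by a sign without consequence), but its nonvanishing --- the exact power $j$ --- is the entire content of the $a+b=0$ case and cannot be deferred.
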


We begin by using Bessel function asymptotics to simplify \eqref{e:derivquotient}.

\begin{lem}\label{l:main} For fixed $a< 0 < b$ and $m>0, \ \varepsilon>0$, there are $\lambda_0>0$ and a function $r_0$ such that $\lambda \in \{ |\lambda| > \lambda_0, |\arg \lambda| < \pi/2 - \varepsilon \}$ solves \eqref{e:derivquotient} if and only if
\begin{equation}\label{e:cj}
\left(\frac m {2b}\right)^{-2\nu} \frac{\Gamma(\nu)}{\Gamma(-\nu)} = \frac{c_0}{\nu^j}(1 + r_0),
\end{equation}
 where $\nu$ is given by \eqref{e:nudef},  $j = 1$ if $a+b \neq 0$, and $j = 2$ if $a+b = 0$, and $c_0 \in \mathbb R \setminus \{0\}$. Moreover $r_0$ is holmorphic for $\lambda \in \{ |\lambda| > \lambda_0, |\arg \lambda| < \pi/2 - \varepsilon \}$ and is $O(\nu^{-1})$ there.
\end{lem}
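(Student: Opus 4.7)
The plan is to expand both sides of the resonance equation \eqref{e:derivquotient} asymptotically in $1/\nu$ and reformulate the equation in terms of
\[
q := \frac{\Gamma(-\nu)}{\Gamma(\nu)}\left(\frac{m}{2b}\right)^{2\nu},
\]
whose reciprocal is the left-hand side of \eqref{e:cj}. The dichotomy between $j=1$ and $j=2$ will emerge from whether the leading corrections from the Hankel side and the $K_\nu$ side happen to cancel.

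For the Hankel factor on the right of \eqref{e:derivquotient} I would use the large-argument asymptotic expansion of $H_\mu^{(2)}(z)$ from \cite[\S9.2]{abst}, or, equivalently, the Riccati equation satisfied by $G(\mu,z) := H_\mu^{(2)\prime}(z)/H_\mu^{(2)}(z)$, to obtain
\[
G(\mu,z) = -i - \frac{1}{2z} + O(z^{-2})
\]
uniformly in the sector where \eqref{e:derivquotient} is being continued. Substituting $\mu = m/a$, $z = \lambda/a$, and then $\lambda = ib\nu(1+O(\nu^{-2}))$ via \eqref{e:nudef}, the right-hand side of \eqref{e:derivquotient} becomes $b\nu/m - (a+b)/(2m) + O(\nu^{-1})$, with an explicit $\nu^{-1}$ coefficient available from the next Hankel correction. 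For the $K_\nu$ side, I would start from $K_\nu = (\pi/(2\sin(\nu\pi)))(I_{-\nu}-I_\nu)$ and the defining series for $I_{\pm\nu}$, and apply Euler's reflection formula to write
\[
K_\nu(z) = \tfrac12\Gamma(\nu)(z/2)^{-\nu}(1+O(\nu^{-1})) + \tfrac12\Gamma(-\nu)(z/2)^\nu(1+O(\nu^{-1})),
\]
with computable corrections at every order. Differentiating in $z$, dividing, and evaluating at $z=m/b$ then gives
\[
\frac{K_\nu'(m/b)}{K_\nu(m/b)} = -\frac{b\nu}{m}\cdot\frac{1-q}{1+q} + O(\nu^{-1}).
\]

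Equating the two sides of \eqref{e:derivquotient} and solving for $\rho := (1-q)/(1+q)$ yields $\rho = -1 + (a+b)/(2b\nu) + O(\nu^{-2})$. When $a+b \neq 0$ this is directly invertible to $q = (4b\nu/(a+b))(1+O(\nu^{-1}))$, hence $1/q = ((a+b)/(4b))\nu^{-1}(1+r_0)$ with $r_0 = O(\nu^{-1})$, proving \eqref{e:cj} with $j=1$ and $c_0 = (a+b)/(4b)$. When $a+b = 0$ the $\nu^{-1}$ term cancels, so I would carry both asymptotic expansions one order further and combine the resulting $\nu^{-2}$ corrections from $K_\nu$ and from $G(m/a,\lambda/a)$ to get $\rho = -1 + m^2/(b^2\nu^2) + O(\nu^{-3})$, whence $1/q = (m^2/(2b^2))\nu^{-2}(1+r_0)$ and \eqref{e:cj} holds with $j=2$. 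Holomorphy of $r_0$ in $\{|\lambda|>\lambda_0,\ |\arg\lambda|<\pi/2-\varepsilon\}$ follows because $q$ is holomorphic and nonvanishing there ($\Gamma$ has no zeros and $(m/(2b))^{2\nu}$ is a fixed branch of an exponential), while $c_0\nu^{-j}$ is nonzero. The main technical obstacle is the $a+b=0$ case: an exact cancellation at the first asymptotic order forces one to compute second-order terms in both Bessel-type expansions and then verify that the resulting $\nu^{-2}$ coefficient $m^2/b^2$ is genuinely nonzero rather than vanishing through a further accidental cancellation.
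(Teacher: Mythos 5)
Your strategy is the same as the paper's: expand $K_\nu'/K_\nu$ via the defining series of $I_{\pm\nu}$ and the reflection formula, expand the Hankel quotient via its large-argument asymptotics, and solve for $g(\nu) = 1/q$. The reorganization through $\rho=(1-q)/(1+q)$ is cosmetic. However, your announced $\nu^{-2}$ coefficient in the $a+b=0$ case is wrong, and the error traces to the one place where a nontrivial idea is required. Writing (as the paper does)
\[
I_\nu(z) = \frac{(z/2)^\nu}{\Gamma(1+\nu)}(1+R_1), \qquad I_\nu'(z) = \frac{\nu}{z}\frac{(z/2)^\nu}{\Gamma(1+\nu)}(1+R_3),
\]
both $R_1$ and $R_3$ are individually $O(\nu^{-1})$, so the naive error in $K_\nu'/K_\nu \approx \tfrac{\nu}{z}\tfrac{1-g}{1+g}$ is $O(1)$, not $O(\nu^{-1})$; controlling it at the $\nu^{-2}$ level requires the cancellation
\[
R_1 - R_3 = I_\nu(z)\Big/\tfrac{(z/2)^\nu}{\Gamma(1+\nu)} - \tfrac{z}{\nu}I_\nu'(z)\Big/\tfrac{(z/2)^\nu}{\Gamma(1+\nu)} = -\frac{z}{\nu}I_{\nu+1}(z)\Big/\tfrac{(z/2)^\nu}{\Gamma(1+\nu)} = -\frac{z^2}{2\nu^2}+O(\nu^{-3}),
\]
which comes from the recurrence relation, not just from truncating the power series to more terms. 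Once this is in hand, the numerator of $g(\nu)$ becomes (with $z=m/b$, $a=-b$)
\[
(R_3-R_1) - R_5 + O(\nu^{-3}) = \frac{m^2}{2b^2\nu^2} - \frac{4m^2-2b^2}{8b^2\nu^2} + O(\nu^{-3}) = \frac{1}{4\nu^2}+O(\nu^{-3}),
\]
so the $m$-dependent pieces cancel and $c_0 = \pm\tfrac18$, not $m^2/(2b^2)$. (The paper's displayed sign is likely a typo; either way $c_0\in\mathbb R\setminus\{0\}$.)

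So this is not a structural gap — the Lemma only needs $c_0\ne 0$, and your argument would still establish that — but it is a genuine computational error in precisely the case you flag as "the main technical obstacle," and the reason is that you treat the $K_\nu$-side remainders as a black-box $O(\nu^{-1})$ where the actual mechanism is a cancellation between $R_1$ and $R_3$ at order $\nu^{-1}$ that is exactly tuned by the Bessel recurrence. If you carry the expansion to the order you say you will, you must also observe that the resulting $\nu^{-2}$ coefficient of $g$ is $m$-independent; otherwise a reader (or you) would be left wondering whether the claimed nonvanishing is real or an artifact of an incomplete expansion.
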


In the proofs below it will be convenient to write 
\[
z=m/b, \qquad g(\nu) = (z/2)^{-2\nu} \frac{\Gamma(\nu)}{\Gamma(-\nu)}.
\] 

\begin{proof}[Proof of Lemma \ref{l:main}]
We first write the left hand side of \eqref{e:derivquotient} in terms of Gamma functions and remainders which are small for $|\nu|$ large.
 We recall the definitions \cite[\S9.6.10, \S9.6.2]{abst}:
\[I_\nu(z) = \frac{z^\nu}{2^\nu} \sum_{k=0}^\infty \frac{(z/2)^{2k}}{k!\Gamma(\nu+k+1)}, \qquad
K_\nu(z) = \frac{\pi}{2\sin{\pi \nu}} \left[I_{-\nu}(z)-I_\nu(z)\vphantom{\frac{a}{b}}\right].
\]
Define the remainders $R_1$, $R_2$, $R_3$, $R_4$ by the equations
\begin{equation}\label{e:inu}
I_\nu(z) = \frac{z^\nu}{2^\nu \Gamma(1+\nu)}(1+R_1), \qquad I_{-\nu}(z) = \frac{z^{-\nu}}{2^{-\nu} \Gamma(1-\nu)}(1+R_2),
\end{equation}
\begin{equation}\label{e:i-nu}
I'_\nu(z)= \frac \nu z \frac{z^\nu}{2^\nu \Gamma(1+\nu)}(1+R_3), \qquad I'_{-\nu}(z) = - \frac \nu z \frac{z^{-\nu}}{2^{-\nu} \Gamma(1-\nu)}(1+R_4), 
\end{equation}
and observe that $R_1$, $R_2$, $R_3$, and $R_4$ are all $O(\nu^{-1})$ for fixed $z$. 
For later use, we record the fact that by the recurrence relation \cite[\S9.6.26]{abst}, we have
\[
\frac{z^\nu}{2^\nu \Gamma(1+\nu)}(R_1 - R_3) = I_\nu(z) - \frac z \nu I'_\nu(z) = - \frac z \nu I_{\nu+1}(z) = - \frac{z^{\nu+2}}{2^{\nu+1} \nu \Gamma(2+\nu)}\left(1 + O\left(\frac 1\nu\right)\right),
\]
so that
\begin{equation}\label{e:inur}
R_1 - R_3 = - \frac{z^2}{2 \nu^2} + O\left(\frac 1{\nu^3}\right).
\end{equation}
We simplify the resulting formula for $K$ using the Gamma reflection formula \cite[\S6.1.17]{abst}:
\begin{align*}
K_\nu(z) &= \frac{\pi}{2\sin{\pi \nu}} \left[\left(\frac{z}{2}\right)^{-\nu}\frac{1+R_2}{\Gamma(1-\nu)} -\left(\frac{z}{2}\right)^\nu \frac{1+R_1}{\Gamma(1+\nu)}\right]\\
&= \frac{\Gamma(1-\nu)\Gamma(\nu)}{2} \left[\left(\frac{z}{2}\right)^{-\nu}\frac{1+R_2}{\Gamma(1-\nu)} -\left(\frac{z}{2}\right)^\nu \frac{1+R_1}{\nu\,\Gamma(\nu)}\right]\\
&= \frac{1}{2} \left[\left(\frac{z}{2}\right)^{-\nu}\Gamma(\nu)(1+R_2) +\left(\frac{z}{2}\right)^\nu \Gamma(-\nu)(1+R_1)\right].
\end{align*}
Similarly
\begin{align*}
K_{\nu}'(z) &= \frac{\nu}{2z} \left[\left(\frac{z}{2}\right)^\nu \Gamma(-\nu)(1 + R_3) - \left(\frac{z}{2}\right)^{-\nu} \Gamma(\nu) (1 + R_4)\right].
\end{align*}
The quotient is
\begin{equation}\label{e:k'k}
\frac{K_{\nu}'(z)}{K_{\nu}(z)} = \frac{\nu}{z}\left[\frac{1 + R_3 - g(\nu)(1 + R_4)}{1 + R_1 + g(\nu)(1 + R_2)}\right].
\end{equation}

On the other hand, by Hankel's asymptotics, the right hand side of \eqref{e:derivquotient} has an expansion in powers of $\nu$. Indeed, applying \eqref{e:happ} with $n = m/a$ and $x=\lambda/a$, and using 
\[
-i\lambda/b = \nu\left(1 - \frac 1{8\nu^2} + O(\nu^{-4})\right),
\]
(see \eqref{e:nudef}) we obtain
\begin{equation}\label{e:h'h}\begin{split}
\frac{ \lambda H_{m/a}^{(2)'}(\frac{\lambda}{a}) }{ m H_{m/a}^{(2)}(\frac{\lambda}{a}) } - \frac{b}{2m} &= \frac {-i\lambda} m\left( 1 + \frac{a+b}{2i\lambda}  + \frac{a^2 - 4 m^2}{8\lambda^2}  + O(\lambda^{-3})\right) \\
&= \frac \nu z \left( 1 - \frac{a+b}{2b \nu}  + \frac{4m^2 - b^2 - a^2}{8b^2\nu^2}  + O(\nu^{-3})\right).
\end{split}\end{equation}

Plugging \eqref{e:k'k} and \eqref{e:h'h} into \eqref{e:derivquotient} gives
\[
\frac{1 + R_3 - g(\nu)(1 + R_4)}{1 + R_1 + g(\nu)(1 + R_2)} =
1 + R_5, \qquad R_5 := - \frac{a+b}{2b \nu}  + \frac{4m^2 - b^2 - a^2}{8b^2\nu^2}  + O(\nu^{-3}).
\]
Solving for $g(\nu)$ we find, using the fact that $R_2$, $R_4$, and $R_5$ are each $O(\nu^{-1})$, that
\[
g(\nu) = \frac{1 + R_3 - (1 + R_1)(1+R_5)}{(1+R_2)(1 + R_5) + 1 + R_4} = \frac{R_3  - R_1 -  R_5 + R_1R_5}{2 + O(\nu^{-1})}.
\]
Using \eqref{e:inur} and the formula for $R_5$, we obtain
\[
g(\nu) =  \begin{cases} -(a+b + O(\nu^{-1}))/(4b\nu), & a+b \ne 0, \\ -(1 + O(\nu^{-1}))/(8\nu^2), & a+b=0. \end{cases}
\]
\end{proof}

\begin{proof}[Proof of Proposition \ref{p:main}]
By Stirling's approximation 
(see e.g. \cite[\S6.1.37]{abst}):
\begin{align*}
g(\nu) &= \left(\frac{z}{2}\right)^{-2 \nu} \frac{\Gamma(\nu)}{\Gamma(-\nu)}  
       = \left(\frac{z}{2}\right)^{-2 \nu} \frac{\sqrt{2 \pi} \nu^{\nu+1/2} e^{-\nu} \left( 1 + O\left(\frac{1}{\nu}\right) \right)}{\sqrt{2 \pi} (-\nu)^{-\nu+1/2} e^{\nu} \left( 1 + O\left(\frac{1}{\nu}\right) \right)}  \\
       &= \left(\frac{e z}{2}\right)^{-2 \nu} \nu^{\nu + 1/2} (-\nu)^{\nu - 1/2}  \left( 1 + O\left(\frac{1}{\nu}\right) \right).
\end{align*}

Using $\log$ to denote the principal branch of the logarithm, we have
\begin{align*}
\log{g(\nu)} &= \left(\nu + \frac{1}{2}\right) \log{\nu} + \left(\nu - \frac{1}{2}\right) \log{(-\nu)} - 2\nu\log{\frac{ez}{2}} + O\left(\frac{1}{\nu}\right) .\\
\end{align*} 
Plugging this into \eqref{e:cj}, we obtain
\begin{align*}
\left(\nu + \frac{1}{2}\right) \log{\nu} + \left(\nu - \frac{1}{2}\right) \log{(-\nu)} - 2\nu\log{\frac{ez}{2}} &= \tilde{c}_0 - j \log{\nu} - 2 \pi i k + O\left(\frac{1}{\nu}\right),
\end{align*}
where $\exp(\tilde{c}_0)=c_0$, and $k \in \mathbb Z$. 
With $\tilde{\nu} = i(2\nu + j)/ez$ this becomes
\begin{equation}\label{e:nutilde2}
 \frac{2 \pi k}{e z}     =  \tilde{\nu} \log{\tilde{\nu}}  + r(\tilde \nu), 
\end{equation}
where $r(\tilde \nu)$ is holomorphic, independent of $k$, and bounded as $ \re \tilde{\nu} \to +\infty$ (since $\re \lambda \to + \infty$ implies $\im \nu \to - \infty$ and hence $\re \tilde \nu \to + \infty$). Indeed, since $\im \nu <0$ (because $\re \lambda >b/2)$, we have $\log{(-\nu)} = \log{\nu} + \pi i$ and hence
\begin{align*}
\tilde{c}_0 - 2 \pi i k
&= 2 \nu \log{\nu} + j \log{\nu} + \left(\nu - \frac{1}{2}\right) i \pi - 2\nu\log{\frac{ez}{2}} + O\left(\frac{1}{\nu}\right)\\
&= \left(2 \nu + j\right) \left(\log{\nu} + \frac{\pi i}{2} - \log{\frac{ez}{2}}\right) + j \log{\frac{ez}{2}} - i\pi \frac{j+1}{2} + O\left(\frac{1}{\nu}\right)\\
&= \left(2 \nu + j\right) \left(\log{(2\nu + j)} - \frac{1}{2\nu + j} + \frac{\pi i}{2} - \log{ez}\right) - i\pi \frac{j+1}{2} + j \log{\frac{ez}{2}} + O\left(\frac{1}{\nu}\right)\\
&= \left(2 \nu + j\right) \left(\log{(i(2\nu + j))} - \log{ez}\right) - i\pi \frac{j+1}{2} +\log{\frac{ez}{2}} - 1  + O\left(\frac{1}{\nu}\right)\\
&= \left(2 \nu + j\right) \left(\log{ \frac{i(2\nu + j)}{ez} } \right) - i\pi \frac{j+1}{2} +\log{\frac{ez}{2}} - 1  + O\left(\frac{1}{\nu}\right),\\
\end{align*}
giving \eqref{e:nutilde2}.
 We will show that if $\lambda_0$ and $k$ are large enough, then \eqref{e:nutilde2} has a unique solution $\tilde \nu_k$ corresponding to a $\lambda_k$ in $\{\lambda \in \mathbb C \colon \re \lambda \ge \lambda_0\}$, and we will compute its asymptotics. Changing variables back to $\lambda$ will give \eqref{e:propasy}.

For this we use the Lambert W function. Recall that for $\re \zeta >0$, the equation 
\begin{equation}\label{e:lambert}
\tilde \nu \log \tilde \nu = \zeta
\end{equation}
has a unique solution with $\re \log \tilde \nu  >1$ and $|\im \log \tilde \nu| < \pi/2$ (and hence a unique solution with $\re \tilde \nu >1$), and it is given by the principal branch of the Lambert W function (see \cite[Figure 4 and Figure 5]{cghjk}). As $\re \zeta \to \infty$, it obeys (see \cite[(4.20)]{cghjk})
\begin{equation}\label{e:lambertasy}
\tilde \nu(\zeta) = \frac{\zeta}{\log\zeta}\left(1 + O\left(\frac{\log\log\zeta}{\log\zeta}\right)\right).
\end{equation}
For $k$ sufficiently large, this reduces solving  \eqref{e:nutilde2} with $\re \tilde \nu >1$ to solving
\[
 \frac{2 \pi k}{e z}     = \zeta  + r(\tilde \nu(\zeta))
\]
for $\zeta$ with $\re \zeta>0$. But e.g. \cite[Chapter 1, Theorem 5.1]{Olver:Asymptotics} guarantees that such a solution exists and is unique, provided $k$ is sufficiently large. We denote the solution by $\zeta_k$, and the corresponding $\tilde \nu$ by $\tilde \nu_k$.

Then \eqref{e:lambertasy} becomes
\[
\log \tilde \nu_k = \log\left(\frac{2 \pi k}{e z} + O(1)\right) - \log\log\left(\frac{2 \pi k}{e z} + O(1)\right) + O\left(\frac{\log\log\left(\frac{2 \pi k}{e z} + O(1)\right)}{\log\left(\frac{2 \pi k}{e z} + O(1)\right)}\right),
\]
that is to say
\begin{align}\label{e:nuk1}
\tilde{\nu}_k &= \frac{2\pi}{ez} \frac{k}{\log{k}} \left(1 + O\left( \frac{\log{\log{k }}}{ \log{ k} } \right) \right). 
\end{align}

To obtain more precise asymptotics for $\im \tilde{\nu}$, we take the imaginary part of \eqref{e:nutilde2}:
\[
  \re \tilde\nu_k \arg{\tilde\nu_k} + \im \tilde\nu_k \log|\tilde\nu_k|= O(1) \label{e:nutilde}
\]
Note that, since $\re \tilde \nu_k > 0$, both terms on the left hand side have the same sign. Consequently $\im \tilde\nu_k \log|\tilde\nu_k|= O(1)$ and so
\[
\im \tilde\nu_k = O\left(\frac 1 {\log k}\right).
\]

Recalling that in terms of $\lambda$ we have
\[
\lambda_k = \frac{b \tilde{\nu}_k e z - i b j}{2} \left( 1 + O\left( \frac{1}{\tilde{\nu}_k^2} \right) \right),
\]
we conclude, using $z=m/b$, that
\begin{align*}
\re \lambda_k &= \frac{bez}{2} \re \tilde{\nu}_k \left( 1 + O\left( \frac{1}{\tilde{\nu}_k^2} \right) \right) 
            = \pi b  \frac{k}{\log k} \left( 1 + O\left( \frac{\log \log k}{\log k} \right) \right), \\
\im \lambda_k &= \left( \frac{bez}{2} \im \tilde{\nu}_k - \frac{bj}{2} \right) \left( 1 + O\left( \frac{1}{\tilde{\nu}_k^2} \right) \right) 
            = -\frac{bj}{2} + O\left( \frac{1}{\log k} \right).
\end{align*}

\end{proof}

This completes the proof of the Theorem. To interpret \eqref{e:propasy} as a Weyl asymptotic, put
\[
N(\lambda) = \# \left\{ k : \lambda_0 \le \re \lambda_k \leq \lambda\right\}.
\]
Then, if $W=-\log k$,
\begin{align*}
\lambda_k &= \pi b \frac{k}{\log{k}} \left(1+ r(k)\right)= -\frac{\pi b}{We^{W}} \left(1+r\right).
\end{align*}
We obtain from this that $We^W = - \pi b \left(1+r\right)/\lambda_k$, and thus, by \eqref{e:lambertasy},
\begin{align*}
W &= \log{\left(\frac{\pi b}{\lambda_k}(1+r)\right)}-\log{\left(-\log{\left(\frac{\pi b}{\lambda_k}(1+r) \right)}\right)}+O\left(\frac{\log{\left(-\log{\left(\frac{\pi b}{\lambda_k}(1+r) \right)}\right)}}{\log{\left(\frac{\pi b}{\lambda_k}(1+r)\right)}}\right)\\
&= -\log{\lambda_k} +\log{\pi b} + O(r) -\log{\log{\lambda_k}}+O\left(\frac{1}{\log{\lambda_k}}\right)+O\left(\frac{\log\log{\lambda_k}}{\log{\lambda_k}}\right).
\end{align*}
From this we get the result
\[
k = e^{-W} = \frac{\lambda_k \log \lambda_k}{\pi b } \left(1+O\left(\frac{\log\log{\lambda_k}}{\log{\lambda_k}}\right)\right),
\]
and hence
\[
N(\lambda) = \frac{\lambda \log \lambda}{\pi b} \left(1+O\left(\frac{\log\log{\lambda}}{\log{\lambda}}\right)\right).
\]
On the other hand
\[
\frac 1 {2\pi} \operatorname{Vol} \{\rho \in \mathbb R, r \ge 0 \colon \rho^2 + m^2 e^{2br} \le \lambda^2\} = \frac {\lambda \log \lambda} {\pi b} + O(\lambda),
\]
allowing us to interpret our result as a Weyl asymptotic for $N(\lambda)$: note  this is the same as the asymptotic obeyed by the eigenvalues of $- \partial_r^2 + m^2e^{2br}$ on $(0,\infty)$: see e.g. \cite[(7.4.2)]{tie}.

\section{Nontrapping surfaces with  funnel} \label{s:3}

If instead $b<0<a$ and $(X_1,g_1)$ is given by
\[
X_1 = (-1/a,\infty) \times S^1, \qquad g_1= dr^2 + f_1(r)^2 d\theta^2,
\qquad
f_1(r) = 
\begin{cases}
	1 +ar &\mbox{if } r \leq 0, \\
	e^{-br} &\mbox{if } r > 0,
\end{cases}
\]
then $(X_1,g_1)$ is a surface of revolution with a cone point and a funnel. In the case $a=1$ the cone point disappears and we have a surface of revolution with a disk and a funnel.
Let
\[
P_1(m) =- \partial_r^2 - f_1^{-1}f_1' \partial_r + f_1^{-2} m^2.
\]
On functions of the form $u(r) e^{i m \theta}$ the Laplacian acts as
\[
\Delta_{g_1}u(r) e^{i m \theta} = P_1(m) u(r) e^{i m \theta}.
\] 

\begin{prop}\label{p:main2}
For any $m>0$ there are $\lambda_0>0, k_0 \in \mathbb N$ such that the resonances of $P_1(m)$ in the region  $\{ \re \lambda \ge \lambda_0\}$ form a sequence $(\lambda_k)_{k \ge k_0}$ satisfying
\[
\lambda_k = \pi a k - \frac{i j a}{2} \log{k} + O(1)
\]
where  $j = 1$ if $a+b \neq 0$, and $j = 2$ if $a+b = 0$.
\end{prop}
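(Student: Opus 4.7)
The plan is to mirror the proof of Proposition \ref{p:main}: derive a transcendental equation via matching across $r = 0$, then extract asymptotics from it. First I would redo the analog of Lemma \ref{p:mercont} for the funnel geometry. On the cone $r \in (-1/a, 0]$, the solution of $(P_1(m) - \lambda^2)u = 0$ that is regular at the cone point $r = -1/a$ (where $Y_{m/a}$ would blow up since $m/a > 0$) is $\psi_2(r) = J_{m/a}(\lambda(r+1/a))$. On the funnel $r > 0$, the change of variables $\rho = (m/|b|)e^{br}$ still produces the modified Bessel equation for $w = e^{-br/2}u$, but now $\rho \to 0$ as $r \to +\infty$, so the roles of $I$ and $K$ are reversed and $\psi_1(r) = e^{br/2}I_\nu(\rho)$ is the $L^2$ solution at $+\infty$, where $\nu = \sqrt{1/4 - \lambda^2/b^2}$ is taken on the principal branch ($\re \nu > 0$ for $\im \lambda > 0$, which forces $\nu \sim -i\lambda/|b|$ rather than $-i\lambda/b$). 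Continuity of $\psi$ and $\psi'$ at $r=0$ yields the transcendental equation
\[
\frac{I_\nu'(m/|b|)}{I_\nu(m/|b|)} + \frac{\lambda J_{m/a}'(\lambda/a)}{m J_{m/a}(\lambda/a)} = \frac{b}{2m},
\]
and meromorphic continuation proceeds as in Lemma \ref{p:mercont}.

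The next step is the asymptotic analysis. Writing $J_{m/a} = (H^{(1)}_{m/a} + H^{(2)}_{m/a})/2$ and letting $\gamma = H^{(2)}_{m/a}(\lambda/a)/H^{(1)}_{m/a}(\lambda/a)$, the series \eqref{e:inu}--\eqref{e:inur} applied with the principal branch of $\nu$ give
\[
A := \frac{I_\nu'(m/|b|)}{I_\nu(m/|b|)} = \frac{\nu}{m/|b|} + \frac{m/|b|}{2\nu} + O(\nu^{-2}) = -\frac{i\lambda}{m} + \frac{i(b^2 + 4m^2)}{8m\lambda} + O(\lambda^{-2}),
\]
while the analog of \eqref{e:h'h} for $H^{(1)}$ (obtained by flipping $i \to -i$ in that formula) together with the Hankel identity $H^{(2)'}/H^{(1)'} = -\gamma(1 + O(\lambda^{-1}))$ gives, for $\gamma$ small,
\[
\frac{\lambda J_{m/a}'(\lambda/a)}{m J_{m/a}(\lambda/a)} = B(1 - 2\gamma) + O(\gamma/\lambda), \qquad B = \frac{i\lambda}{m} - \frac{a}{2m} + \frac{i(a^2 - 4m^2)}{8m\lambda} + O(\lambda^{-2}).
\]
The crucial observation is that the leading $\pm i\lambda/m$ terms of $A$ and $B$ cancel, so
\[
A + B - \frac{b}{2m} = -\frac{a+b}{2m} + \frac{i(a^2 + b^2)}{8m\lambda} + O(\lambda^{-2})
\]
is only $O(1)$, and in fact $O(1/\lambda)$ when $a + b = 0$. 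Substituting into the transcendental equation and solving for $\gamma$ yields the analog of \eqref{e:cj}, namely $\gamma = c_0 \lambda^{-j}(1 + O(\lambda^{-1}))$ with $c_0 \ne 0$ and $j = 1$ if $a+b \ne 0$, $j = 2$ if $a+b = 0$.

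The final step is to insert the Hankel asymptotic $\gamma(\lambda) = i e^{im\pi/a} e^{-2i\lambda/a}(1 + O(\lambda^{-1}))$, take logarithms to obtain
\[
-\frac{2i\lambda_k}{a} = -j\log\lambda_k + \widetilde c_0 - 2\pi i k + O(\lambda_k^{-1}),
\]
and solve by iteration. Since the logarithm appears only with a linear argument, a straightforward implicit-function-theorem argument (simpler than the Lambert $W$ analysis used for Proposition \ref{p:main}) produces a unique $\lambda_k$ for each large $k$, with leading order $\lambda_k = \pi a k + O(\log k)$; iterating once using $\log \lambda_k = \log k + O(1)$ gives $\lambda_k = \pi a k - (ija/2)\log k + O(1)$.

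The main obstacle is verifying the cancellation $A + B = O(1)$, which drives the logarithmic imaginary part and distinguishes the funnel geometry from the cusp case of Proposition \ref{p:main}. It rests on choosing the correct branch of $\nu$: the naive choice $\nu = -i\lambda/b$ from Lemma \ref{p:mercont} has $\re \nu < 0$ when $b<0$, so $I_\nu$ would blow up at $\rho = 0$ rather than decay, and the cancellation would not occur; one must instead use the principal branch $\nu \sim -i\lambda/|b|$, which flips the sign of $\nu/z$. A secondary technical point is the stronger cancellation at order $1/\lambda$ when $a + b = 0$, which is a direct calculation with next-order terms in \eqref{e:inur} and \eqref{e:h'h} and is what promotes $j$ from $1$ to $2$.
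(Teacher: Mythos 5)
Your proposal is correct and follows essentially the same route as the paper: your matching condition is exactly \eqref{e:ij} rewritten with the positive argument $m/|b|$, your expansions of $I_\nu'/I_\nu$ and of the cone-side quotient reproduce the paper's use of \eqref{e:inu}--\eqref{e:inur} together with Hankel's asymptotics, and your conclusion $\gamma = H^{(2)}_{m/a}/H^{(1)}_{m/a} = c_0\lambda^{-j}\left(1+O(\lambda^{-1})\right)$ is the paper's $e^{-2i\chi} = c_0\lambda^{-j}\left(1+O(\lambda^{-1})\right)$ obtained there via \eqref{e:chiapp}, with matching constants in both cases $a+b\neq 0$ and $a+b=0$. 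The only cosmetic differences are that the paper solves for $e^{-2i\chi}$ exactly from the $P,Q,R,S$ representation of $J'/J$ (which, unlike your expansion in small $\gamma$, does not presuppose smallness of $\gamma$ -- though that is restored by noting that \eqref{e:ij} forces $J'/J = i+O(\lambda^{-1})$ at any resonance, so $\gamma$ can be solved for exactly as a linear fractional function of $J'/J$), and that the final linear-plus-logarithm equation is solved there with the Lambert $W$ function rather than by iteration.
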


\begin{proof}
Arguing as in the proof of Proposition \ref{p:mercont},  with the difference that the asymptotics
 \cite[\S9.2.3, \S9.2.4]{abst} for the Hankel functions must now be replaced by \cite[\S9.1.7, \S9.1.8, \S9.1.9]{abst}, we find that resonances are given by solutions to
\begin{equation} \label{e:ij}
\frac{ I_{\nu}'(\frac{m}{b}) }{ I_{\nu}(\frac{m}{b}) } = 
\frac{ \lambda J_{m/a}'(\frac{\lambda}{a}) }{ m J_{m/a}(\frac{\lambda}{a}) } - \frac{b}{2m},
\end{equation}
where now \eqref{e:nudef} is replaced by
\[
\nu = \sqrt{\frac 14 - \frac {\lambda^2}{b^2}} = i \frac \lambda b\left(1 - \frac{b^2}{8\lambda^2} + O\left(\frac 1 {\lambda^{4}}\right)\right),
\]
so that again $\re\nu>0$ when $\im \lambda >0$. (We are using here the Friedrichs extension of the Laplacian at the cone point -- see e.g. \cite[(3.3)]{mw}).
 
Using \eqref{e:inu}, \eqref{e:i-nu}, and \eqref{e:inur}, the left hand side of \eqref{e:ij}  becomes
\[
\frac{ I_{\nu}'(z) }{ I_{\nu}(z) } =   \frac \nu z \frac{(1+R_3)}{(1+R_1)} =  \frac{\nu}{z} + \frac{z}{2 \nu} + O\left( \frac{1}{\nu^2} \right),
\]
giving
\[
\frac{ I_{\nu}'(\frac{m}{b}) }{ I_{\nu}(\frac{m}{b}) } = 
\frac{i \lambda}{m} - \frac{i (b^2 + 4m^2)}{8 m \lambda} + O\left( \frac{1}{\lambda^2} \right),
\]
so \eqref{e:ij} becomes
\[
\frac{ J_{m/a}'(\frac{\lambda}{a}) }{  J_{m/a}(\frac{\lambda}{a}) } = 
i + \frac b {2\lambda} -  \frac{i (b^2 + 4m^2)}{8  \lambda^2} + O\left( \frac{1}{\lambda^3} \right) =: i +j_0.
\]
Plugging this $j_0$ into \eqref{e:chiapp}, with $x = \lambda/a$ and $n = m/a$, and simplifying gives
\[
e^{-2i\chi} = \begin{cases} (i(a+b) + O(\lambda^{-1}))/(4\lambda), & a + b \ne 0, \\ (a^2 + O(\lambda^{-1}))/(8\lambda^2), & a + b = 0,\end{cases} 
\]
where $\chi = \frac \lambda a  - \frac{\pi m}{2a} - \frac \pi 4$. As in the previous section, we write 
\[
e^{-2i \chi} = c_0 \lambda^{-j} (1 + O(\lambda^{-1})).
\] 

We solve for $\lambda$ by taking the log of both sides:
\begin{align*}
-2i \chi - 2 \pi k i &= \log{c_0} - j \log{\lambda} + O(\lambda^{-1})\\
-2i \left( \frac{\lambda}{a} - \frac{\pi m}{2a} - \frac{\pi}{4} \right) - 2 \pi ki &=  \log{c_0} - j \log{\lambda} + O(\lambda^{-1})\\
\frac{2i}{a} \lambda - j \log{\lambda} &=  - 2 \pi k i  + O(1).\\
\end{align*}

Substituting $\tilde{\lambda} = \frac{-2i}{ja} \lambda$, we get
\begin{align*}
\tilde{\lambda} + \log{ \frac{\tilde{\lambda} j a i}{2} } &= \frac{2\pi ki}{j}  + O(1)\\ 
\tilde{\lambda} + \log{\tilde{\lambda}} &= \frac{2\pi ik}{j} + O(1).
\end{align*}

Using the Lambert W function to solve this (as we solved \eqref{e:nutilde2} above), we see that 
\[
\tilde{\lambda}_k = -\frac{2\pi ik}{j} - \log{k} + O(1), \qquad
\lambda_k = \pi a k - \frac{i j a}{2} \log{k} + O(1).
\]
\end{proof}

\appendix
\section{Hankel's asymptotics for Bessel functions}

Here we collect some consequences of Hankel's asymptotics for $H^{(2)}$ and $J$. Let $n \in \mathbb C$ be fixed. For $x$ in the universal cover of $\mathbb C \setminus \{0\}$, as $|x| \to \infty$ with $\arg x$ varying in a compact subset of $(-2\pi,\pi)$, by \cite[\S 9.2.8--16]{abst} we have
\[
\frac{H^{(2)'}_n(x)}{H^{(2)}_n(x)} = \frac{-i R(n,x) - S(n,x)}{P(n,x) - i Q(n,x)},
\]
where
\[\begin{split}
P(n,x) = 1 - \frac{(4n^2-1)(4n^2-9)}{128x^2} + O(x^{-4}), \qquad Q(n,x) = \frac{4n^2-1}{8x} + O(x^{-3}), \\
R(n,x) = 1 - \frac{(4n^2-1)(4n^2+15)}{128x^2} + O(x^{-4}), \qquad S(n,x) = \frac{4n^2 + 3}{8x} + O(x^{-3}).
\end{split}\]
Consequently,
\begin{equation}\label{e:happ}\begin{split}
\frac{H^{(2)'}_n(x)}{H^{(2)}_n(x)} &= \frac{-i - \frac{4n^2 + 3}{8x} +i \frac{(4n^2-1)(4n^2+15)}{128x^2} + O(x^{-3})}{1 - i \frac{4n^2-1}{8x} -\frac{(4n^2-1)(4n^2-9)}{128x^2} + O(x^{-3})} \\
&= -i - \frac 1 {2x}   + i \frac{4n^2-1}{8x^2} +  O(x^{-3}). 
\end{split}\end{equation}
On the other hand, for $x \in \mathbb C$,   as $|x| \to \infty$ with $\arg x$  varying in a compact subset of $(-\pi,\pi)$, by \cite[\S 9.2.5, \S 9.2.11]{abst} we have
\[
\frac{J'_n(x)}{J_n(x)} = \frac{-R \sin \chi - S \cos \chi}{P \cos \chi - Q \sin \chi},
\]
with $P,\ Q,\ R,\ S$ as above and $\chi = x - (2n+1)\pi/4$. Rearranging terms we find that
\[
e^{-2i\chi} = \frac{\frac{J'_n(x)}{J_n(x)}(Q-Pi) - R - Si}{\frac{J'_n(x)}{J_n(x)}(Q + Pi) - R + Si}.
\]
Now if $J'_n(x)/J_n(x) = i + j_0$, with $j_0 = O(x^{-1})$, then
\[
\frac{J'_n(x)}{J_n(x)}(Q + Pi) - R + Si = -2 + O(x^{-1}),
\]
and
\[
\frac{J'_n(x)}{J_n(x)}(Q-Pi) - R - Si =   \frac {-i} {2x} + \frac{3(4n^2-1)}{16x^2} + j_0\left(-i + \frac{4n^2-1}{8x}\right) + O(x^{-3}),
\]
giving
\begin{equation}\label{e:chiapp}
e^{-2i\chi} = -\frac 1 2  \left( \frac {-i} {2x} + \frac{3(4n^2-1)}{16x^2} + j_0\left(-i + \frac{4n^2-1}{8x}\right) + O(x^{-3})\right) (1 + O(x^{-1})).
\end{equation}

\def\arXiv#1{\href{http://arxiv.org/abs/#1}{arXiv:#1}}

\end{document}